\documentclass[12pt]{extarticle}
\setlength{\textwidth}{150mm}
\setlength{\oddsidemargin}{7mm}
\setlength{\evensidemargin}{7mm}

\usepackage{amsmath,amsthm,latexsym,graphicx,epstopdf,cite,amssymb,authblk,float,subcaption,cleveref}
\usepackage[margin=3cm]{geometry}

\newcommand\blfootnote[1]{
	\begingroup\renewcommand\thefootnote{}\footnote{#1}
	\addtocounter{footnote}{-1}
	\endgroup}

\newtheorem{theorem}{Theorem}[section]
\newtheorem{lemma}{Lemma}[section]

\numberwithin{equation}{section}

\theoremstyle{definition}

\newtheorem{example}[theorem]{Example}
\usepackage[T1]{fontenc}
\usepackage[utf8]{inputenc}
\usepackage{authblk}

\title {\bf Strong metric dimension of generalized Jahangir graph}
\author{Rashid Farooq\thanks{Corresponding author.} }
\author{Naila Mehreen}
\affil{School of Natural Sciences,
	National University of Sciences and Technology,
	H-12 Islamabad, Pakistan}

\date{}
\begin{document}
	\maketitle
	\date{}
	\blfootnote{\raggedright Email addresses: farook.ra@gmail.com (R. Farooq), nailamehreen@gmail.com (N. Mehreen).}
\begin{abstract}
	Let $G$ be a simple and connected graph
	with vertex set $V(G)$. A vertex $w\in V(G)$ strongly resolves two vertices $u,v \in V(G)$ if 
	$v$ belongs to a shortest $u-w$ path or
	$u$ belongs to a shortest $v-w$ path. A set $W \subseteq V(G)$ is a strong resolving set for 
	$G$ if every pair of vertices of $G$ is strongly resolved by some vertex of $W$. A strong
	metric basis of $G$ is a strong resolving set for $G$ with minimum cardinality. 
	The strong metric dimension of $G$, denoted
	by $sdim(G)$, is the cardinality of a strong metric basis of $G$. 
	In this paper we compute the strong metric dimension of generalized 
	Jahangir graph $J(n,m)$, where $m\geq 3$ and $n\geq 2$.
\end{abstract}
\begin{quote}
	{\bf Keywords:}
	Strong metric dimension, Strong resolving set, 
	Mutually maximally distant vertices, Jahangir graph.
\end{quote}
\begin{quote}
	{\bf AMS Classification:} 
	05C12  
\end{quote}

\section{Introduction}\label{sec-intro}
Let $G$ be a simple and connected graph with vertex set $V(G)$ and edge set $E(G)$. 
If two vertices $u,v\in V(G)$ are endpoints of an edge then we represent the edge by $uv$.
The distance between two vertices $u,v\in V(G)$, denoted by $d_G(u,v)$, is the length of a 
shortest $u-v$ path in $G$. 
The diameter $d(G)$ of $G$ is defined by 
$d(G) = \max\{d_G(u,v)\mid u,v\in V(G)\}$. 
The set of all vertices that are adjacent to $u$ in $G$ 
is called neighborhood of $u$ and is denoted by $N(u)$. 
The degree $d_G(v)$ of a vertex $v$ in $G$ is the number of 
edges incident to $v$.
If there is no risk of confusion, we write distance between vertices $u,v\in V(G)$ 
by $d(u,v)$. Similarly, the degree of $v$ in $G$ is written by $d(v)$ for short. 
A cycle of length $n$ is denoted by $C_n$
and a path of length $n-1$ is denoted by $P_n$.
A complete graph of order $n$ is denoted by $K_n$.

A vertex $w\in V(G)$ resolves two vertices $u,v\in V(G)$ 
if $d(u,w)\neq d(v,w)$. Suppose $W=\{w_1,w_2,\dots,w_k\}\subseteq V(G)$ is an ordered set of vertices.
The representation of a vertex $v\in V(G)$ with 
respect to $W$, denoted by $r_G(v|W)$,  is the $k$-vector $r_G(v|W)=(d(v,w_1),\dots,d(v,w_k))$. 
Then $W$ is a resolving set of $G$ if $r_G(v|W)\neq r_G(u|W)$ for every two distinct vertices $u,v\in V(G)$. 
That is, $W$ is a resolving set of $G$ if every two distinct vertices of $G$ are resolved by some vertex of $W$.
A resolving set of $G$ with minimum cardinality over all resolving sets of $G$ is called a basis of $G$. 
The cardinality of a basis of $G$ is called the metric dimension of $G$ and is denoted by $dim(G)$. 
The concept of metric dimension was introduced by Slater \cite{slater1975, slater1988}.
Harary and Melter \cite{HM1976} independently studied the metric dimension.
For further study and applications of metric dimension, we refer 
\cite{Imran, CHMPP2012, CHMPPS2005, CGH2008, CEJO2000, JS1994, johnson1993, KRR1996, LKRJ2018,MT1984, tomescu2008, YI2015} to the reader.

In 2004, Seb\H{o} and Tannier \cite{ST2004} introduced the concept of strong metric dimension of graphs 
which is more restricted invariant than the metric dimension of graphs. 
A vertex $w\in V(G)$ strongly resolves two vertices $u,v\in V(G)$ if 
either $d(u,w)=d(u,v)+d(v,w)$ or $d(v,w)=d(v,u)+d(u,w)$. 
Alternatively, $w$ strongly resolves $u$ and $v$ if 
either $v$ belongs to a shortest $u-w$ path or $u$ belongs to a 
shortest $v-w$ path. A vertex set $W\subseteq V(G)$ is a strong 
resolving set of $G$ if every two distinct vertices of $G$ are 
strongly resolved by some vertex of $W$. A strong resolving set
of $G$ with minimum cardinality over all strong resolving
sets of $G$ is called a strong metric basis of $G$. 
The strong metric dimension of $G$, denoted by 
$sdim(G)$, is the cardinality of a strong metric basis of $G$.

A vertex cover of $G$ is a set of vertices $S \subseteq V(G)$ such that every 
edge of $G$ is incident to at-least one vertex of $S$. The minimum cardinality 
of a vertex cover of $G$ is called the vertex covering number of $G$ and is denoted 
by $\alpha(G)$. An independent set of $G$ is a set of vertices of $G$ such that no 
two of which are adjacent in $G$. The largest cardinality of an independent set of
$G$ is called the independence number of $G$ and is denoted by $\beta(G)$. 
	For an $n$-vertex graph $G$, it is known that 
		$S \subseteq V(G)$ is an independent set if and only if its complement 
		$\overline{S}$  is a vertex cover of 
		$G$ and hence $\alpha(G) + \beta(G)=n$.

Let $u,v\in V(G)$. We say that $u$ is maximally distant from $v$, denoted by 
$u{\rm MD}v$, if $d(v,w)\leq d(u,v)$ for every $w\in N(u)$. 
If $u{\rm MD} v$ and $v {\rm MD} u$ then $u$ and $v$ are called mutually maximally distant and we denote it by $u {\rm MMD} v$. 
The strong resolving graph $G_{SR}$ of $G$ is 
a graph with vertex set $V(G_{SR})=V(G)$ and $uv\in E(G_{SR})$ if 
and only if $u{\rm MMD}v$.

Oellermann and Peters-Fransen \cite{OP2007} further investigated the strong metric
dimension and showed that the problem of finding strong metric dimension can be
transformed into a more well-known problem. Furthermore, the authors  extended
the strong metric dimension of graphs to directed graphs.
We refer \cite{Eunjeong, kang2016, Jozef2012, Jozef, KYR2016} for 
further study on  strong metric dimension
of graphs. 
The relationship between strong metric
dimension of $G$ and vertex covering number
of $G_{SR}$ is given in the following theorem.
\begin{theorem}[Oellermann and Peters-Fransen \cite{OP2007}]
	\label{thm-vc-sd}
	For any connected graphs $G$,
	 $sdim(G)=\alpha(G_{SR})$. 
\end{theorem}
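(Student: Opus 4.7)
The plan is to establish the equivalence that a set $W\subseteq V(G)$ is a strong resolving set of $G$ if and only if $W$ is a vertex cover of $G_{SR}$. Once this is proved, passing to minimum cardinalities on both sides gives $sdim(G)=\alpha(G_{SR})$ immediately. So the whole argument reduces to proving both directions of this combinatorial equivalence.

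For the easy direction (every strong resolving set is a vertex cover of $G_{SR}$), I would first prove the key auxiliary fact: if $u\, {\rm MMD}\, v$, then the only vertices that strongly resolve the pair $\{u,v\}$ are $u$ and $v$ themselves. Suppose for contradiction some $w\notin\{u,v\}$ strongly resolves $u,v$, and without loss of generality assume $v$ lies on a shortest $u$--$w$ path, so $d(u,w)=d(u,v)+d(v,w)$. Let $v'\in N(v)$ be the next vertex on a shortest $v$--$w$ path, so $d(v',w)=d(v,w)-1$. Combining $d(u,v')\le d(u,v)+1$ with $d(u,v')\ge d(u,w)-d(v',w)=d(u,v)+1$ forces $d(u,v')=d(u,v)+1$, contradicting $v\, {\rm MD}\, u$. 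Applying this lemma to each edge of $G_{SR}$ shows that any strong resolving set must contain at least one endpoint of that edge.

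For the converse (every vertex cover of $G_{SR}$ is a strong resolving set) I need, for an arbitrary pair $u\neq v$, to manufacture an edge of $G_{SR}$ whose endpoints strongly resolve $u,v$. The plan is a two-stage greedy extension: starting from $u$, repeatedly step to a neighbor strictly farther from $v$; when no such neighbor exists, we have reached a vertex $u^\ast$ with $u^\ast\, {\rm MD}\, v$, and by construction $u$ lies on a shortest $v$--$u^\ast$ path. Next, starting from $v$, repeat the process moving away from $u^\ast$ to obtain $v^\ast$ with $v^\ast\, {\rm MD}\, u^\ast$ and $v$ on a shortest $u^\ast$--$v^\ast$ path. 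The remaining point is to check $u^\ast\, {\rm MD}\, v^\ast$: for any $w\in N(u^\ast)$, using $u^\ast\, {\rm MD}\, v$ and the collinearity $d(v^\ast,u^\ast)=d(v^\ast,v)+d(v,u^\ast)$, a short triangle-inequality chain $d(v^\ast,w)\le d(v^\ast,v)+d(v,w)\le d(v^\ast,v)+d(v,u^\ast)=d(v^\ast,u^\ast)$ does the job. So $u^\ast\, {\rm MMD}\, v^\ast$, and since $W$ covers the edge $u^\ast v^\ast$ of $G_{SR}$, whichever endpoint lies in $W$ strongly resolves $u,v$ via the collinearity established during the construction.

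Putting the two directions together yields $sdim(G)=\alpha(G_{SR})$. The main obstacle is the converse direction: the extension argument has several moving parts that must be coordinated, and care is needed to ensure that the second greedy walk (from $v$ away from $u^\ast$) does not destroy the maximally-distant status of $u^\ast$ with respect to the moving target. The key leverage that makes everything go through is that $u^\ast\, {\rm MD}\, v$ is preserved no matter what $v^\ast$ ends up being, because the defining inequality only references neighbors of $u^\ast$ and the distance to $v$, and $v^\ast$ is reached via $v$.
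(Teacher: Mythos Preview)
The paper does not supply its own proof of this statement; Theorem~\ref{thm-vc-sd} is quoted from Oellermann and Peters-Fransen~\cite{OP2007} and used as a black box throughout. Your proposal is correct and is essentially the argument of that reference: the equivalence ``$W$ is a strong resolving set of $G$ $\Longleftrightarrow$ $W$ is a vertex cover of $G_{SR}$'' is established exactly as you outline, via (i) the auxiliary lemma that a mutually maximally distant pair can only be strongly resolved by one of its own members, and (ii) the two-stage greedy extension producing, for any pair $u,v$, an edge $u^\ast v^\ast$ of $G_{SR}$ with $u,v$ both lying on a shortest $u^\ast$--$v^\ast$ path. The triangle-inequality check that $u^\ast\,{\rm MD}\,v^\ast$ survives the second walk is correct as written.
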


A generalized Jahangir graph $J(n,m)$ for $m \geq 3$ and $n\geq 2$, is a graph
on $nm + 1$ vertices consisting of a cycle $C_{nm}$
with one additional vertex which is adjacent to $m$ vertices of
$C_{nm}$ at distance $n$ to each other on $C_{nm}$
 (\cite{ABT2008}). For simplicity, the generalized Jahangir
  graph hereinafter referred to as Jahangir graph.
Tomescu and Javaid \cite{TJ2007} determined the
formula for the metric dimension of  
Jahangir graph $J(2,m)$.
In this paper, we determine the formula for 
strong metric dimension of Jahangir graph $J(n,m)$ for $m \geq 4$ and $n\geq 5$. We find 
strong resolving graph of  $J(n,m)$ and use
Theorem~\ref{thm-vc-sd} to determine the
formula for strong metric dimension of $J(n,m)$.

\section{Strong metric dimension of $J(n,m)$}
Let $u_i$, where $i= 1,\ldots, nm$, be the vertices 
of the cycle $C_{nm}$ in the graph $J(n,m)$ and 
$c$ be the vertex of  $J(n,m)$ which is adjacent
to $m$ vertices $\{u_{nk+1}\mid k=0,1,\ldots,m-1\}$ of the cycle $C_{nm}$.
Thus, the cycle $C_{nm}$ in $J(n,m)$ has two types of vertices: those that are adjacent to $c$ and those that are not 
adjacent to $c$. 
Let 
 $U_{1}=\{u_{nk+1}\mid k=0,1,\ldots,m-1\}\cup \{c\}$ 
 and
$U_{2}=V(J(n,m))\backslash U_{1}$. Then 
$d(c) = m$ and $d(x) = 3$ for each
$x\in U_1\setminus \{c\}$ and $d(x) = 2$ for each
$x\in U_2$. 
For each $k\in \{0,1,\ldots,m-1\}$, the cycle 
$cu_{nk+1}u_{nk+2}\ldots u_{n(k+1)+1}c$ is 
called an internal cycle of $J(n,m)$. Any two internal 
cycles in a Jahangir graph $J(n,m)$ have at-most one edge in common.
It is easy to see that 
$d(J(n,m)) = 2(\lfloor \frac{n}{2}\rfloor +1)$.
 A Jahangir graph $J(2,8)$ is depicted
 in Figure~\ref{j28}.
\begin{figure}[h]
	\centering
	\includegraphics[width=6cm]{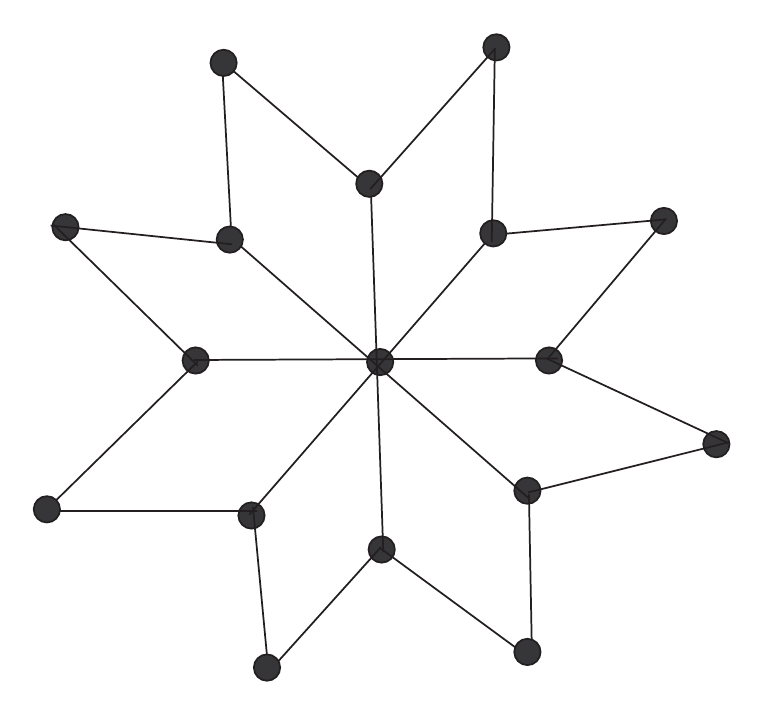}
	\caption{ Jahangir graph $J(2,8)$}	\label{j28}
\end{figure}	
\begin{lemma}
	If $m=3$ and $n\in\{2,3,4\}$ then 
	$sdim(J(n,m))=3$.
\end{lemma}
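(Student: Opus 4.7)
The plan is to invoke Theorem~\ref{thm-vc-sd} and show that $\alpha(J(n,3)_{SR})=3$ for each $n\in\{2,3,4\}$; since each of the three graphs has at most $13$ vertices, the strategy is to identify the strong resolving graph edge by edge and compute its vertex covering number directly.

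I would first fix the notation of the preceding paragraph---$c$ adjacent to the spokes $u_1,u_{n+1},u_{2n+1}$---and exploit the dihedral symmetry of $J(n,3)$ (the rotation $u_i\mapsto u_{i+n}$ and the reflection fixing $u_1$) to cut the MMD verification to a handful of orbit representatives. For each $n$ I would tabulate the distances using that $d(u_i,u_j)$ equals the minimum of the cycle distance between $u_i$ and $u_j$ and $d(u_i,c)+d(c,u_j)$, and then, for each candidate pair $\{u,v\}$ whose distance is close to the diameter, verify both conditions $d(v,w)\le d(u,v)$ for $w\in N(u)$ and $d(u,w)\le d(u,v)$ for $w\in N(v)$. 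I anticipate the following outcome: for $n=2$, $J(n,3)_{SR}$ is the matching with edges $\{u_2,u_5\}$, $\{u_4,u_1\}$, $\{u_6,u_3\}$ (with $c$ isolated); for $n=3$, the matching with edges $\{u_2,u_6\}$, $\{u_5,u_9\}$, $\{u_3,u_8\}$ (with $c,u_1,u_4,u_7$ isolated); and for $n=4$, three vertex-disjoint copies of $K_{1,2}$ with centres $u_3,u_7,u_{11}$, each centre joined to the two ``two-steps-away'' non-spoke vertices on the adjacent arcs. In every case $\alpha(J(n,3)_{SR})=3$ follows at once: for the two matchings, three independent edges force cover size exactly three; in the $n=4$ case each of the three $K_{1,2}$ components requires at least one cover vertex (so $\alpha\ge 3$), and the three star centres $\{u_3,u_7,u_{11}\}$ realise the bound.

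The main obstacle is making the MMD enumeration exhaustive rather than merely representative: a candidate pair $\{u,v\}$ can easily satisfy $u\,{\rm MD}\,v$ while $v\,{\rm MD}\,u$ fails because some neighbour of $v$ lies strictly farther from $u$ than $v$ itself. I would guard against oversights by first computing the eccentricity of every vertex, restricting attention to pairs within distance one of an eccentricity, and then checking both MD conditions individually before declaring a pair MMD; closure of the resulting list under the dihedral action provides a final sanity check. Once $J(n,3)_{SR}$ has been identified in each of the three cases, Theorem~\ref{thm-vc-sd} converts $\alpha(J(n,3)_{SR})=3$ into $sdim(J(n,3))=3$, which is exactly the lemma.
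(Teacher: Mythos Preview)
Your proposal is correct and follows essentially the same approach as the paper: in each of the three cases you identify exactly the same MMD pairs (yielding three disjoint $K_2$'s for $n=2,3$ and three disjoint $P_3\cong K_{1,2}$'s centred at $u_3,u_7,u_{11}$ for $n=4$), compute $\alpha(J(n,3)_{SR})=3$, and conclude via Theorem~\ref{thm-vc-sd}. The only difference is that you spell out the symmetry and distance-checking methodology more explicitly than the paper, which simply lists the MMD pairs.
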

\begin{proof}
We first consider $J(2,3)$. Then  
$u_2{\rm MMD}u_5$, $u_4{\rm MMD}u_1$ and 
$u_6{\rm MMD}u_3$. Any other pair of vertices
of $J(2,3)$ does not contain mutually maximally distant vertices.
Therefore, the strong resolving graph of $J(2,3)$ consists of three copies of $K_{2}$ and an isolated vertex $c$ and thus $\alpha(J(2,3)_{SR})=3$.  

Secondly, we consider  $J(3,3)$. 
Then $u_2{\rm MMD}u_6$, $u_3{\rm MMD}u_8$ and $u_5{\rm MMD}u_9$. 
Any other pair of vertices
of $J(3,3)$ does not contain mutually maximally distant vertices.
Therefore, the strong resolving graph of $J(3,3)$
 consists of three copies of $K_{2}$ and four isolated vertices given by $u_1$, $u_4$, $u_7$
  and $c$. Thus $\alpha(J(3,3)_{SR})=3$. 

Finally, we consider $J(4,3)$. Then
 $u_3{\rm MMD}u_8$, $u_3{\rm MMD}u_{10}$,
  $u_7{\rm MMD}u_2$, $u_7{\rm MMD}u_{12}$,
   $u_{11}{\rm MMD}u_4$ and
    $u_{11}{\rm MMD}u_6$. 
    Any other pair of vertices
    of $J(4,3)$ does not contain mutually maximally distant vertices.
 Therefore, the strong resolving graph of $J(4,3)$ consists of three copies of 
 $P_{3}$ and four isolated vertices given by
  $u_1$, $u_5$, $u_9$ and $c$. Thus $\alpha(J(4,3)_{SR})=3$.  
  
  By Theorem~\ref{thm-vc-sd}, we have $sdim(J(n,3))=3$ for $n\in\{2,3,4\}$.     
\end{proof}
In rest of the paper, we consider the
graph $J(n,m)$ with $n\geq 5$ and $m\geq 4$. Lemmas~\ref{lemma-even} and \ref{lemma-odd} are easy to prove by looking at the structure of Jahangir graph $J(n,m)$.
\begin{lemma}
\label{lemma-even}
Assume that $n> 5$ is even and $m\geq 4$.
For $k,k'\in \{0,1,\ldots,m-1\}$, let  
$C=cu_{nk+1}u_{nk+2}\ldots u_{n(k+1)+1}c$ and  
$C'=cu_{nk'+1}u_{nk'+2}\ldots u_{n(k'+1)+1}c$ be two internal cycles of $J(n,m)$. 
Then we have the following:
\begin{description}
	\item[{\rm (a).}] Let  $C$ and $C'$ share exactly one edge in common and take $x\in V(C)$ and $y\in V(C')$. Set $k' = k+1$, where
	$k+1$ is an the integer modulo $m$. 
	Then $d(x,y) = n+1$ if and only if $\{x,y\}=\{u_{nk+\frac{n}{2}+1},u_{n(k+1)+\frac{n}{2}+2}\}$
	or $\{x,y\}=\{ u_{nk+\frac{n}{2}},u_{n(k+1)+\frac{n}{2}+1}\}$.
	\item[{\rm (b).}] Let  $C$ and $C'$ share no edge in common and take $x\in V(C)$ and $y\in V(C')$. 
	Then $d(x,y) = n+2$ if and only if $\{x,y\}=\{u_{nk+\frac{n}{2}+1}, u_{nk'+\frac{n}{2}+1}\}$.
	\item[{\rm (c).}] Let $u_{nk+i},u_{nk+j}\in V(C)\cap U_{2}$. Then 
	$d(u_{nk+i},u_{nk+j})=\frac{n}{2}+1$ if and only if $|i-j|=\frac{n}{2}+1$ .
\end{description}
\end{lemma}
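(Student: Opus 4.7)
The plan is to determine distances in $J(n,m)$ by classifying shortest paths. For vertices $x = u_{nk+i}$ and $y = u_{nk'+j}$ on $C_{nm}$ with $1 \le i, j \le n+1$, any $x$--$y$ path either stays on $C_{nm}$ (cost equal to one of the two cyclic arc-lengths along $C_{nm}$) or passes through the hub $c$ (cost at least $d(x,c) + d(c,y)$). Since each internal cycle has length $n+2$, one has $d(u_{nk+i}, c) = \min(i,\, n+2-i)$; denote this value by $a_i$, and note that $a_i \le \frac{n}{2}+1$ with equality iff $i = \frac{n}{2}+1$.

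For part~(a), $C$ and $C'$ share the edge $c u_{n(k+1)+1}$, so the two candidate short paths from $x = u_{nk+i}$ to $y = u_{n(k+1)+j}$ are via $c$ (length $a_i + a_j$) and via the shared spoke vertex $u_{n(k+1)+1}$ along $C_{nm}$ (length $n + j - i$); the opposite arc of $C_{nm}$ has length $(m-1)n + i - j \ge 2n > n+2$ for $m \ge 4$ and $n \ge 6$, so it cannot be shortest. Setting $\min(a_i + a_j,\, n + j - i) = n+1$ splits into two subcases. In the subcase $a_i + a_j = n+1$ with $n+j-i \ge n+1$, the bound $a_i, a_j \le \frac{n}{2}+1$ forces $(a_i, a_j) \in \{(\frac{n}{2}, \frac{n}{2}+1), (\frac{n}{2}+1, \frac{n}{2})\}$, and the constraint $j \ge i + 1$ selects exactly $(i,j) \in \{(\frac{n}{2}+1, \frac{n}{2}+2), (\frac{n}{2}, \frac{n}{2}+1)\}$, which is the claim. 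The subcase $n+j-i = n+1$ with $a_i + a_j \ge n+1$ imposes $j = i+1$ and then forces $i \in \{\frac{n}{2}, \frac{n}{2}+1\}$, recovering the same two pairs.

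For part~(b), $C$ and $C'$ share no edge, so they are separated on $C_{nm}$ by at least one intermediate internal cycle; hence any arc of $C_{nm}$ between $x \in V(C)$ and $y \in V(C')$ has length strictly larger than $a_i + a_j$ (a routine check using $n \ge 6$, $m \ge 4$), giving $d(x,y) = a_i + a_j \le n+2$. Equality $a_i + a_j = n+2$ holds iff $a_i = a_j = \frac{n}{2}+1$, i.e., iff $i = j = \frac{n}{2}+1$, which is the stated pair. For part~(c), with $i, j \in \{2, \ldots, n\}$ both vertices lie in the interior of the path $u_{nk+1} u_{nk+2} \ldots u_{n(k+1)+1}$, and a shortest path between them stays within the internal cycle $C_k$ (any exit through $c$ and return is at least as long), so $d(u_{nk+i}, u_{nk+j}) = \min(|i-j|,\, n+2-|i-j|)$, which equals $\frac{n}{2}+1$ iff $|i-j| = \frac{n}{2}+1$.

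The main obstacle is the case analysis in part~(a): verifying that no other index pair $(i,j)$ realizes $\min(a_i+a_j,\, n+j-i) = n+1$ requires simultaneously handling both expressions, since the minimum is attained by one while the other must meet an inequality. Elsewhere the hypotheses $n > 5$ even and $m \ge 4$ comfortably rule out the ``long-way-around'' arc of $C_{nm}$ and detours through multiple spokes, so each of (b) and (c) reduces to a one-line verification once the dominant shortest path has been identified.
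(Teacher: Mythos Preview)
Your argument is correct. The paper does not actually prove this lemma: immediately before its statement the authors write that Lemmas~\ref{lemma-even} and~\ref{lemma-odd} ``are easy to prove by looking at the structure of Jahangir graph $J(n,m)$'' and then move on. So there is no proof in the paper to compare against; your write-up supplies exactly the details the authors chose to omit.

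A few remarks on your execution. Your reduction $d(x,y)=\min\bigl(a_i+a_j,\ \text{shorter arc of }C_{nm}\bigr)$, with $a_i=\min(i,\,n+2-i)$, is the right organizing principle and makes all three parts transparent. In part~(a) the two-subcase split is clean, and your check that the long arc satisfies $(m-1)n+i-j\ge 2n>n+1$ correctly disposes of it. In part~(b) the phrase ``a routine check'' is justified: once both cyclic arcs pass through at least one full intermediate segment of length $n$, one gets arc length $\ge a_i+a_j+n-2>a_i+a_j$, so the hub route always wins. In part~(c) you might make explicit that the only vertices of $C$ with neighbours outside $C$ are $c$, $u_{nk+1}$, and $u_{n(k+1)+1}$, and that any detour through them is already accounted for by the cycle metric on $C$; this is what makes $d_{J(n,m)}(u_{nk+i},u_{nk+j})=\min(|i-j|,\,n+2-|i-j|)$ hold exactly.
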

\begin{lemma}
	\label{lemma-odd}
	Assume that $n\geq 5$ is odd and $m\geq 4$.
	For $k,k'\in \{0,1,\ldots,m-1\}$, let  
	$C=cu_{nk+1}u_{nk+2}\ldots u_{n(k+1)+1}c$ and  
	$C'=cu_{nk'+1}u_{nk'+2}\ldots u_{n(k'+1)+1}c$ be 
	two internal cycles of $J(n,m)$. 
	Then we have the following:
	\begin{description}
		\item[{\rm (a).}] 
		Let  $C$ and $C'$ share exactly one edge in common and take $x\in V(C)$ and $y\in V(C')$. 
		Then $d(x,y)=n+1$ if and only if  
		$\{x,y\} = \{u_{nk+\lfloor \frac{n}{2}\rfloor +1}, 
		u_{n(k+1)+\lfloor \frac{n}{2}\rfloor +2}\}$. 
		Also, $d(x,y) = n$  and both $x$ and $y$ do not belong to a
		diametrical path in $J(n,m)$ if and only if 
		$\{x,y\} = \{u_{nk+\lfloor \frac{n}{2}\rfloor }, u_{n(k+1)+\lfloor \frac{n}{2}\rfloor +1} \}$ or $\{x,y\} = \{u_{nk+\lfloor \frac{n}{2}\rfloor +2}, u_{n(k+1)+\lfloor \frac{n}{2}\rfloor +3} \}$.
		\item[{\rm (b).}]
		Let  $C$ and $C'$ share no edge in common and take $x\in V(C)$ and $y\in V(C')$. 
		Then $d(x,y) = n+1$ if and only if
		$x\in \{u_{nk+\lfloor \frac{n}{2}\rfloor +1}, u_{nk+\lfloor \frac{n}{2}\rfloor +2} \}$ and
		 $y\in \{u_{nk'+\lfloor \frac{n}{2}\rfloor +1},$  $ u_{nk'+\lfloor \frac{n}{2}\rfloor +2} \}$.
		\item[{\rm (c).}] 
		Let $x,y\in V(C)\cap U_{2}$. Then 
		$d(x,y)=\lfloor \frac{n}{2}\rfloor +1$ if and only if $\{x,y\}=\{u_{nk+i}, u_{nk+j}\}$
		with $|i-j|\in \{\frac{n}{2}+1, \frac{n}{2}+2\}$.
	\end{description}
\end{lemma}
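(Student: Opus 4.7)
My plan is to reduce every distance computation to a minimum over a small collection of explicit routes and then carry out the resulting (essentially one-dimensional) case analysis. The preliminary observation I would use throughout is that, for a vertex $u_{nk+i}$ with $i\in\{1,\dots,n+1\}$ in an internal cycle $C$, one has $d(u_{nk+i},c)=\min(i-1,\,n+1-i)+1$, so the distance from $c$ within $C$ attains its maximum value $\lfloor n/2\rfloor+1=(n+1)/2$ at exactly the two middle vertices $u_{nk+\lfloor n/2\rfloor+1}$ and $u_{nk+\lfloor n/2\rfloor+2}$. Because $m\geq 4$, the outer arc of $C_{nm}$ that avoids both $C$ and $C'$ is never shortest, so only two or three natural routes compete: the arc(s) of $C_{nm}$ internal to the cycles in question, and the route that enters the hub $c$ via two spoke endpoints.

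For part (a), with $k'=k+1$ and the shared spoke edge $c\,u_{n(k+1)+1}$, I would write $x=u_{nk+i}$, $y=u_{n(k+1)+j}$; then $d(x,y)=\min(L_{\mathrm{spoke}},L_c)$, where $L_{\mathrm{spoke}}=(n+1-i)+(j-1)$ and $L_c=d(x,c)+d(y,c)\leq n+1$. Requiring $L_c=n+1$ forces both $x$ and $y$ to be middle vertices, while requiring $L_{\mathrm{spoke}}\geq n+1$ forces $j\geq i+1$, and combining the two pins down the unique diametrical pair stated. For the distance-$n$ claim, I would enumerate all index pairs $(i,j)$ achieving $\min(L_{\mathrm{spoke}},L_c)=n$; the ``no common diametrical path'' clause then discards those $(i,j)$ that are obtained from $\{u_{nk+\lfloor n/2\rfloor+1},u_{n(k+1)+\lfloor n/2\rfloor+2}\}$ by removing one edge at either end of some length-$(n+1)$ shortest path, leaving exactly the two pairs listed.

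Part (b) is the easier analogue: when $|k-k'|\geq 2$ the two cycles share no edge, so the only competitive route is through $c$ and $d(x,y)=d(x,c)+d(y,c)$, which equals $n+1$ precisely when both summands attain their maximum $\lfloor n/2\rfloor+1$, i.e., when $x$ and $y$ both lie in the middle-vertex sets. For part (c), with $x=u_{nk+i},y=u_{nk+j}\in V(C)\cap U_2$, the distance reduces to $\min(|i-j|,\,d(x,c)+d(y,c))$; comparing the two terms as $|i-j|$ varies shows that the minimum equals $\lfloor n/2\rfloor+1$ exactly when $|i-j|\in\{\lfloor n/2\rfloor+1,\lfloor n/2\rfloor+2\}$, with the cycle arc itself realizing the bound in the first case and the through-$c$ route trimming one edge in the second.

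The main obstacle I anticipate is the diametrical-path clause of (a): the odd parity of $n$ breaks the left-right symmetry of an internal cycle, so the two middle vertices $u_{nk+\lfloor n/2\rfloor+1}$ and $u_{nk+\lfloor n/2\rfloor+2}$ behave asymmetrically with respect to the shared spoke, and the length-$(n+1)$ shortest paths from $u_{nk+\lfloor n/2\rfloor+1}$ to $u_{n(k+1)+\lfloor n/2\rfloor+2}$ are not unique (one runs down each side of $C$). Tracking both of these paths carefully is what separates the two specific distance-$n$ pairs appearing in the statement from the handful of other index pairs that also have $d(x,y)=n$ but sit inside some length-$(n+1)$ shortest path.
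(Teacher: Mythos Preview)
Your approach is correct and is precisely the kind of direct distance computation the paper has in mind: the paper does not give an explicit proof of this lemma, stating only that Lemmas~\ref{lemma-even} and~\ref{lemma-odd} ``are easy to prove by looking at the structure of Jahangir graph $J(n,m)$.'' Your route-enumeration argument (arc through the shared spoke versus the hub~$c$, with the long outer arc discarded because $m\geq 4$) is exactly the structural inspection that makes the lemma routine, and your identification of the diametrical-path clause in part~(a) as the only place requiring real care is accurate.
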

The mutually maximally distant vertices of Jahangir graph $J(n,m)$
depend on the parity of $n$.
In next two theorems, we find
the mutually maximally distant vertices in the graph $J(n,m)$
when $n$ is even and when $n$ is odd.
\begin{theorem}
\label{thm-even}
	Assume that $n> 5$ is even and $m\geq 4$.
	For $k,k'\in \{0,1,\ldots,m-1\}$, let  
	$C=cu_{nk+1}u_{nk+2}\ldots u_{n(k+1)+1}c$ and  
	$C'=cu_{nk'+1}u_{nk'+2}\ldots u_{n(k'+1)+1}c$ be two internal cycles of $J(n,m)$. 
	Then we have the following:
	\begin{description}
		\item[{\rm (a).}] Let  $C$ and $C'$ share exactly one edge in common and take $x\in V(C)$ and $y\in V(C')$. 
		Then $x {\rm MMD} y$ if and only if $d(x,y) = n+1$.
		\item[{\rm (b).}] Let  $C$ and $C'$ share no edge in common and take $x\in V(C)$ and $y\in V(C')$. 
			Then $x {\rm MMD} y$ if and only if $d(x,y) = n+2$.
			\item[{\rm (c).}] Let $x,y\in V(C)$. Then 
				$x {\rm MMD} y$ if and only if $x,y\in U_{2}$ with $d(x,y)=\frac{n}{2}+1$.
	\end{description}
	\end{theorem}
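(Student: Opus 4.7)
The plan is to treat the three parts of the theorem in turn. For each biconditional, I split into the "if" direction (given the distance condition, verify MMD) and the "only if" direction (if the distance condition fails, show MMD fails). Throughout I invoke Lemma~\ref{lemma-even} to read off the relevant distances, and I exploit the neighborhood structure of $J(n,m)$: the hub $c$ has the $m$ neighbors $\{u_{nk+1}\mid 0\leq k\leq m-1\}$; each $u_{nk+1}\in U_1\setminus\{c\}$ has the three neighbors $c$, $u_{nk}$, and $u_{nk+2}$; each vertex of $U_2$ has precisely its two cycle-neighbors on $C_{nm}$.

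For the "if" directions, given a candidate pair $\{x,y\}$ at the prescribed distance, I list the neighbors $w$ of $x$ and verify $d(w,y)\leq d(x,y)$, and symmetrically for the neighbors of $y$. Each such check reduces to comparing a path along $C_{nm}$ with a path routed through $c$; in every case the shorter of the two options is bounded above by $d(x,y)$. For part (b) this step is immediate because $d(x,y)=n+2$ equals the diameter $d(J(n,m))$, so no neighbor can possibly be farther.

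For the "only if" directions I argue the contrapositive. In (a), if $x=c$, taking a neighbor $u_{nk''+1}$ of $c$ with $k''$ distinct from $k$ and $k+1$ yields $d(u_{nk''+1},y)=d(c,y)+1$, so $c$ is not maximally distant; if $x\in U_1\setminus\{c\}$ lies on $C$, an external cycle neighbor (such as $u_{nk}$ when $x=u_{nk+1}$) is strictly farther from $y$; if $x\in V(C)\cap U_2$ is not one of the two vertices flagged in Lemma~\ref{lemma-even}(a), a one-step move along $C$ toward the midpoint of $C$ strictly increases the distance to $y$ by $1$. Part (b) is analogous, using that the unique midpoint pair from Lemma~\ref{lemma-even}(b) realizes the diameter. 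Part (c) rests on two observations: if $x\in U_1\cap V(C)$, an external cycle neighbor of $x$ is strictly farther from any $y\in V(C)$, ruling out the $U_1$ case; for $x,y\in V(C)\cap U_2$ the distance is bounded by $\tfrac{n}{2}+1$, and any strict inequality can be repaired by walking one step along $C$.

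The main obstacle is the converse direction of (a). The condition $d(x,y)<n+1$ admits many sub-configurations: $x$ and $y$ may lie in $U_1$ or $U_2$ and at various offsets from the shared edge of $C$ and $C'$. In particular, near-miss pairs at distance $n$, such as $\{u_{nk+\tfrac{n}{2}+1},u_{n(k+1)+\tfrac{n}{2}+1}\}$, appear deceptively symmetric and must each be ruled out individually by exhibiting a concrete farther neighbor. This is where the parity assumption $n$ even is crucial, since the midpoints of $C$ and $C'$ sit asymmetrically on opposite sides of the shared edge, forcing exactly the two admissible MMD pairs from Lemma~\ref{lemma-even}(a).
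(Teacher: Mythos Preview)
Your plan is correct and would yield a complete proof, but the ``only if'' directions in parts (a) and (b) take a different route from the paper's. Where you case-split on whether $x$ lies in $\{c\}$, in $U_1\setminus\{c\}$, or in $U_2\cap V(C)$ and then exhibit an explicit farther neighbor in each sub-case (flagging the near-miss pairs at distance $n$ as needing individual attention), the paper instead writes down \emph{all} shortest paths between the two extremal pairs of Lemma~\ref{lemma-even}(a) (respectively the single extremal pair of Lemma~\ref{lemma-even}(b)) and observes that every pair $x\in V(C)$, $y\in V(C')$ with $d(x,y)<n+1$ (respectively $<n+2$) lies on one of these geodesics; since any subpath of a geodesic is itself a geodesic, the non-endpoint among $x,y$ has a neighbor on that path strictly farther from the other, so MMD fails in one stroke. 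This containment argument dispatches in a single line exactly the ``main obstacle'' you identify---no separate handling of the $U_1$ vertices, the hub $c$, or the symmetric near-miss pairs is needed. Your approach, by contrast, is more elementary (it never appeals to the global list of geodesics) and makes the role of the degree-$2$ condition on $U_2$ vertices more transparent, at the cost of the extra bookkeeping you anticipate. For the ``if'' directions and for part (c), your argument and the paper's essentially coincide.
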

\begin{proof}
		(a). By symmetry, we assume that $k'= k+1$, where $k+1$ is the integer modulo $m$. It is readily seen that 
		$d(u_{nk+\frac{n}{2}+1},u_{n(k+1)+\frac{n}{2}+2})=
		n+1= d(u_{nk+\frac{n}{2}},u_{n(k+1)+\frac{n}{2}+1})$ and it is the
		 largest distance between any two vertices of $C$ and $C'$.
		 All possible shortest paths from $u_{nk+\frac{n}{2}+1}$ to $u_{n(k+1)+\frac{n}{2}+2}$
		are given by:
		\begin{eqnarray}
		&& u_{nk+\frac{n}{2}+1}u_{nk+\frac{n}{2}+2} \ldots u_{n(k+1)}u_{n(k+1)+1}\ldots
		u_{n(k+1)+\frac{n}{2}+1}u_{n(k+1)+\frac{n}{2}+2}, \label{thm-paths1} \\
		&& u_{nk+\frac{n}{2}+1}u_{nk+\frac{n}{2}+2} \ldots u_{n(k+1)+1}cu_{n(k+2)+1}u_{n(k+2)}\ldots u_{n(k+1)+\frac{n}{2}+3}u_{n(k+1)+\frac{n}{2}+2}, \\
		&& u_{nk+\frac{n}{2}+1}u_{nk+\frac{n}{2}} \ldots
		u_{nk+1}cu_{n(k+2)+1}u_{n(k+2)}\ldots
		u_{n(k+1)+\frac{n}{2}+3}u_{n(k+1)+\frac{n}{2}+2}.
		\label{thm-paths2}
		\end{eqnarray}
		Also, all possible shortest paths 
		from $u_{nk+\frac{n}{2}}$ to $u_{n(k+1)+\frac{n}{2}+1}$ 
		are given by:
		\begin{eqnarray}
		&& u_{nk+\frac{n}{2}}u_{nk+\frac{n}{2}+1} \ldots 
		u_{n(k+1)}u_{n(k+1)+1}\ldots u_{n(k+1)+\frac{n}{2}}u_{n(k+1)+\frac{n}{2}+1},
		\label{thm-paths3}\\
		&& u_{nk+\frac{n}{2}}u_{nk+\frac{n}{2}-1} \ldots u_{nk+1}cu_{n(k+1)+1}u_{n(k+1)+2}\ldots
		u_{n(k+1)+\frac{n}{2}}u_{n(k+1)+\frac{n}{2}+1},\\
		&& u_{nk+\frac{n}{2}}u_{nk+\frac{n}{2}-1} \ldots u_{nk+1}cu_{n(k+2)+1}u_{n(k+2)}\ldots
		u_{n(k+1)+\frac{n}{2}+2}u_{n(k+1)+\frac{n}{2}+1}.
		\label{thm-paths4}
		\end{eqnarray}
		
		If $d(x,y)< n+1$ then $x$ and $y$ belong to one of the shortest paths given in  
		\eqref{thm-paths1}$\sim$\eqref{thm-paths4}. Thus $x$ and $y$ are not mutually maximally distant.
		
		Conversely, let $d(x,y) = n+1$. Then by
		Lemma~\ref{lemma-even} (a), we have  
		$\{x,y\}= \{u_{nk+\frac{n}{2}+1},u_{n(k+1)+\frac{n}{2}+2} \}$ 
		or $\{x,y\}=  \{u_{nk+\frac{n}{2}},u_{n(k+1)+\frac{n}{2}+1} \}$. 
		In either case, $x,y\in U_2$ and thus
		the neighbors of $x$ and $y$ in $J(n,m)$ belong to $V(C)\cup V(C')$.
		 This proves that $x {\rm MMD} y$. 
	
	(b). Note that 
	$d(u_{nk+\frac{n}{2}+1}, u_{nk'+\frac{n}{2}+1}) = n+2= d(J(n,m))$ 
	and all possible shortest paths
		   from $u_{nk+\frac{n}{2}+1}$ to $u_{nk'+\frac{n}{2}+1}$ are given by:
		\begin{eqnarray}
		&& u_{nk+\frac{n}{2}+1}u_{nk+\frac{n}{2}+2}\ldots u_{n(k+1)+1}cu_{nk'+1}u_{nk'+2}\ldots
		 u_{nk'+\frac{n}{2}}u_{nk'+\frac{n}{2}+1}, \label{thm-paths5}\\ 
		 && u_{nk+\frac{n}{2}+1}u_{nk+\frac{n}{2}+2} \ldots 
		 u_{n(k+1)+1}cu_{n(k'+1)+1}u_{n(k'+1)}\ldots u_{nk'+\frac{n}{2}}u_{nk'+\frac{n}{2}+1},\\
		 && u_{nk+\frac{n}{2}+1}u_{nk+\frac{n}{2}}
		 \ldots u_{nk+1}cu_{n(k'+1)+1}u_{n(k'+1)}\ldots u_{nk'+\frac{n}{2}}u_{nk'+\frac{n}{2}+1},\\
		 && u_{nk+\frac{n}{2}+1}u_{nk+\frac{n}{2}}\ldots u_{nk+1}cu_{n(k'+1)+1}u_{n(k'+1)}\ldots
		  u_{nk'+\frac{n}{2}}u_{nk'+\frac{n}{2}+1}.\label{thm-paths6}
		\end{eqnarray}
	
	If $d(x,y) < n+2 $ then $x$ and $y$ belong to one of the shortest paths given in 
	\eqref{thm-paths5}$\sim$\eqref{thm-paths6}. Thus  $x$ and $y$ are not mutually maximally distant.
	
	Conversely, assume that $d(x,y) = n+2$. Since $d(J(n,m)) = n+2$, 
	obviously 	$x{\rm MMD} y$.

(c).  
If $d(x,y)<\frac{n}{2}+1$ then one of the neighbors of $x$ on $C$, say $z$, does not belong to 
the shortest path from $x$ to $y$ and thus $d(z,y)=1+d(x,y)$. This shows that $x$ and $y$
are not mutually maximally distant. Also if $x\notin U_{2}$ then one of the neighbors of $x$, say $w$, 
does not belong to $C$ and
$d(w,y) = 1+ d(x,y)$. This shows that $x$ and $y$ are not mutually maximally distant.

Now we prove the converse. Since $C$ is an even cycle of length $n+2$, the largest distance between any two vertices of $C$ is 
$\frac{n}{2} + 1$. Let $x,y\in U_{2}$ with $d(x,y)=\frac{n}{2}+1$. Then all neighbors of $x$ and $y$ in $J(n,m)$
belong to $V(C)$ which implies that $x {\rm MMD} y$.
\end{proof}
For each $k\in \{0,1,\ldots, m-1\}$, Lemma~\ref{lemma-even} (a) and Theorem~\ref{thm-even} (a) imply that 
\begin{equation}\label{part-a}
u_{nk+\frac{n}{2}+1}u_{n(k+1)+\frac{n}{2}+2},
u_{nk+\frac{n}{2}+1}u_{n(k-1)+\frac{n}{2}} \in E(J(n,m)_{SR}),
\end{equation}
where $k-1$ and $k+1$ are integers modulo $m$. 
We denote by
${\cal A}_1$ the set of all edges given in \eqref{part-a} for each
$k\in \{0,1,\ldots, m-1\}$.

Similarly, for each 
$k,k'\in \{0,1,\ldots, m-1\}$ with $|k-k'|\notin \{1,m-1\}$, 
Lemma~\ref{lemma-even} (b) and Theorem~\ref{thm-even} (b) give
\begin{equation}\label{part-b}
u_{nk+\frac{n}{2}+1}u_{nk'+\frac{n}{2}+1}
\in E(J(n,m)_{SR}).
\end{equation}
Denote by
${\cal B}_1$ the set of all edges given in \eqref{part-b} for each 
$k,k'\in \{0,1,\ldots, m-1\}$ with $|k-k'|\notin \{1,m-1\}$.

Moreover, for each $k\in \{0,1,\ldots, m-1\}$,
Lemma~\ref{lemma-even} (c) and Theorem~\ref{thm-even} (c) give
\begin{equation}\label{part-c}
u_{nk+i}u_{nk+j}\in E(J(n,m)_{SR}),
\end{equation}
where $i\in \{2,\ldots,\frac{n}{2}-1\}$,
$j\in \{\frac{n}{2}+3,\ldots,n\}$ with
$j-i=\frac{n}{2}+1$.
We denote by
${\cal C}_1$ the set of all edges given in \eqref{part-c} for each
$k\in \{0,1,\ldots, m-1\}$.
Thus, $E(J(n,m))= {\cal A}_1\cup {\cal B}_1\cup {\cal C}_1$ when $n$ is even and $m\geq 4$.
\begin{lemma}
\label{lem-vc-even}
Assume that $n> 5$ is even and $m\geq 4$. Then
$\alpha(J(n,m)_{SR}) = m(\frac{n-2}{2})$.
\end{lemma}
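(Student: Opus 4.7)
The plan is to exploit the complete description of $J(n,m)_{SR}$ given by $E(J(n,m)_{SR}) = {\cal A}_1 \cup {\cal B}_1 \cup {\cal C}_1$, decompose its vertex set according to the type of edges that touch each vertex, and compute the vertex cover number componentwise. To ease notation, I would set $a_k = u_{nk+\frac{n}{2}}$, $b_k = u_{nk+\frac{n}{2}+1}$, $c_k = u_{nk+\frac{n}{2}+2}$ for each $k \in \{0,1,\dots,m-1\}$ (indices read modulo $m$). Two structural observations drive everything. First, every edge in ${\cal A}_1 \cup {\cal B}_1$ has both endpoints in the $3m$-element set $\{a_k, b_k, c_k : 0\le k\le m-1\}$. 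Second, every edge in ${\cal C}_1$ has both endpoints in the $m(n-4)$ vertices $u_{nk+i}$ with $i\in\{2,\dots,\frac{n}{2}-1\}\cup\{\frac{n}{2}+3,\dots,n\}$, which are disjoint from the $abc$-vertices. Thus $J(n,m)_{SR}$ splits as a vertex-disjoint union of three pieces: an $abc$-graph $H$ with edge set ${\cal A}_1\cup{\cal B}_1$, a graph $M$ with edge set ${\cal C}_1$, and an independent set consisting of $c$ together with each hub $u_{nk+1}$.

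Next I would observe that $M$ is in fact a matching. Within block $k$ the parameter $i\in \{2,\dots,\frac{n}{2}-1\}$ determines its partner $j = i+\frac{n}{2}+1 \in\{\frac{n}{2}+3,\dots,n\}$ uniquely, so the $\frac{n}{2}-2$ edges produced by block $k$ are pairwise vertex-disjoint, and blocks do not share any vertex outside $\{a_k,b_k,c_k,u_{nk+1}\}$. Consequently $M$ is a disjoint union of $m(\frac{n}{2}-2)$ edges and $\alpha(M) = m(\frac{n}{2}-2)$.

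It remains to compute $\alpha(H)$. For the upper bound I would take the set $B=\{b_0,\dots,b_{m-1}\}$: every edge of ${\cal A}_1$ has the form $b_k a_{k-1}$ or $b_k c_{k+1}$, while every edge of ${\cal B}_1$ joins two $b_k$'s, so $B$ is a vertex cover of $H$ of size $m$. For the lower bound I would exhibit a matching of size $m$, namely $\{b_k a_{k-1} : 0\le k\le m-1\}\subseteq {\cal A}_1$, whose edges are pairwise disjoint because the $b_k$'s are distinct and the $a_{k-1}$'s are distinct; since the matching number never exceeds the vertex cover number, $\alpha(H)\ge m$. Hence $\alpha(H) = m$.

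Adding up the contributions from the three pieces and using the fact that isolated vertices contribute $0$, I obtain $\alpha(J(n,m)_{SR}) = \alpha(H) + \alpha(M) = m + m\bigl(\tfrac{n}{2}-2\bigr) = m\bigl(\tfrac{n-2}{2}\bigr)$, as required. I do not expect significant technical obstacles: the delicate work has already been done in classifying the MMD pairs in Theorem~\ref{thm-even}, and the only mild care needed is in verifying the vertex-disjointness of the three pieces and the fact that the chosen edges $\{b_k a_{k-1}\}$ genuinely form a matching, both of which follow immediately from the indexing.
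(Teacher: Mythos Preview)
Your proposal is correct and follows essentially the same approach as the paper: both arguments split the strong resolving graph into the ${\cal A}_1\cup{\cal B}_1$ part (covered minimally by the $m$ ``middle'' vertices $u_{nk+\frac{n}{2}+1}$) and the ${\cal C}_1$ part (a perfect matching on $m(\frac{n}{2}-2)$ edges), then add the contributions. Your write-up is in fact more explicit about the lower bound, supplying the size-$m$ matching $\{b_k a_{k-1}\}$ where the paper only appeals informally to the pendency of the $a$- and $c$-vertices.
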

\begin{proof}
We construct a vertex cover of $J(n,m)_{SR}$ with minimum cardinality.
	From \eqref{part-a}$\sim$\eqref{part-c}, we see that two pendent vertices
	$u_{n(k+1)+\frac{n}{2}+2}$ and 
	$u_{n(k-1)+\frac{n}{2}}$ are incident to $u_{nk+\frac{n}{2}+1}$ in  $J(n,m)_{SR}$, 
	for each $k\in \{0,1,\ldots, m-1\}$.
	Thus, the $m$ vertices $\{u_{nk+\frac{n}{2}+1}\mid k=0,1,\ldots, m-1\}$
	are minimum number of vertices to cover the edges
	 of ${\cal A}_1$. Let $S=\{u_{nk+\frac{n}{2}+1}\mid k=0,1,\ldots, m-1\}$.
	Then $S$ also covers
	 the edges of ${\cal B}_1$. 
	Moreover, the edges of ${\cal C}_1$ are
	$m(\frac{n}{2}-2)$ copies of $K_2$. 
	Therefore, minimum $m(\frac{n}{2}-2)$ vertices
	are required to cover the edges of ${\cal C}_1$. 
	To cover the edges of ${\cal C}_1$, we augment $S$ by taking 
	$S:= S\cup \{u_{nk+i}\mid 
	k=0,1,\ldots, m-1\mbox{ and } 
	i= 2,\ldots,\frac{n}{2}-1\}$.
	Thus $S$ is a vertex cover of 
	$J(n,m)_{SR}$ with minimum cardinality.
	The cardinality of $S$ is given by
	 $m + m(\frac{n}{2}-2) = m(\frac{n-2}{2})$.
		\end{proof}

We have the following main result when $n$ is even.
\begin{theorem}
Assume that $n> 5$ is even and $m\geq 4$. Then	$sdim(J(n,m)) =m(\frac{n-2}{2})$.
\end{theorem}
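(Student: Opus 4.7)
The plan is to read off the theorem directly from the two tools already assembled in the paper. Theorem~\ref{thm-vc-sd} of Oellermann and Peters-Fransen asserts that for any connected graph $G$ one has $sdim(G)=\alpha(G_{SR})$, and Lemma~\ref{lem-vc-even} has just been proved to give $\alpha(J(n,m)_{SR}) = m(\frac{n-2}{2})$ for even $n>5$ and $m\geq 4$. The proof therefore consists of a single invocation of these two statements for the graph $G = J(n,m)$ under the stated parameter constraints, yielding $sdim(J(n,m)) = \alpha(J(n,m)_{SR}) = m(\frac{n-2}{2})$.

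Since the theorem is an immediate corollary, the real content of the argument lies in the preparatory material rather than in this final step. What I would verify before writing this short proof is that $J(n,m)$ is indeed connected (which is clear from its construction as a cycle $C_{nm}$ with an additional hub $c$ adjacent to $m$ vertices), so that Theorem~\ref{thm-vc-sd} genuinely applies. No further combinatorial work is needed; in particular, there is no obstacle of the form of finding an explicit strong resolving set, because that information is already embedded in the vertex cover $S$ constructed in the proof of Lemma~\ref{lem-vc-even}: the complement of that cover within $V(J(n,m)_{SR})$ would yield the corresponding independent set, and $S$ itself is a strong metric basis via the standard duality $\alpha + \beta = n$.

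In practice the write-up can therefore be essentially one or two sentences: name the graph, cite Theorem~\ref{thm-vc-sd}, substitute Lemma~\ref{lem-vc-even}, and conclude with the equality $sdim(J(n,m)) = m(\frac{n-2}{2})$. If desired, one may add a remark that the explicit strong metric basis is
\[
S = \{u_{nk+\frac{n}{2}+1}\mid k=0,1,\ldots,m-1\} \cup \{u_{nk+i}\mid k=0,1,\ldots,m-1,\; i=2,\ldots,\tfrac{n}{2}-1\},
\]
inherited verbatim from the proof of Lemma~\ref{lem-vc-even}, but this is decorative rather than essential. Hence the proof is a direct deduction with no anticipated difficulty.
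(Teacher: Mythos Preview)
Your proposal is correct and matches the paper's own proof exactly: the paper simply writes ``The proof follows from Theorem~\ref{thm-vc-sd} and Lemma~\ref{lem-vc-even}.'' Your additional remarks about connectedness of $J(n,m)$ and the explicit set $S$ are accurate but, as you note, decorative.
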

\begin{proof}
The proof follows from Theorem~\ref{thm-vc-sd} and
Lemma \ref{lem-vc-even}. 
\end{proof}


In the following theorem, we find mutually
 maximally distant vertices in 
 $J(n,m)$ when $n$ is odd. The proof is 
similar to the proof of Theorem~\ref{thm-even} and is omitted.
\begin{theorem}
	\label{thm-odd} 
	Assume that $n\geq 5$ is odd and $m\geq 4$.
	For $k,k'\in \{0,1,\ldots,m-1\}$, let  
	$C=cu_{nk+1}u_{nk+2}\ldots u_{n(k+1)+1}c$ and  
	$C'=cu_{nk'+1}u_{nk'+2}\ldots u_{n(k'+1)+1}c$ be two internal cycles of $J(n,m)$. 
	Then we have the following:
\begin{description}
	\item[{\rm (a).}] 
	Let  $C$ and $C'$ share exactly one edge in common and take $x\in V(C)$ and $y\in V(C')$. 
	If $d(x,y)=n+1$ then $x{\rm MMD}y$. Let $d(x,y)<n+1$. 
	Then  $x {\rm MMD} y$ if and only if $d(x,y) = n$  and both $x$ and $y$ do not belong to a
	 diametrical path in $J(n,m)$.
	\item[{\rm (b).}]
	Let  $C$ and $C'$ share no edge in common and take $x\in V(C)$ and $y\in V(C')$. 
	Then $x {\rm MMD} y$ if and only if $d(x,y) = n+1$.
	\item[{\rm (c).}]
	Let $x,y\in V(C)$. Then 
	$x {\rm MMD} y$ if and only if $x,y\in U_{2}$ with 
	$d(x,y)=\lfloor \frac{n}{2}\rfloor +1$.
	\end{description}
		\end{theorem}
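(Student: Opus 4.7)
The plan is to mirror the argument given for Theorem~\ref{thm-even}, replacing Lemma~\ref{lemma-even} by Lemma~\ref{lemma-odd} at each step and accounting for the fact that when $n$ is odd the diameter is $d(J(n,m)) = 2(\lfloor n/2\rfloor + 1) = n+1$ rather than $n+2$. As in the even case, MMD will be checked by (i) using the diameter to certify MMD for free, (ii) listing all shortest paths for near-diametrical pairs and showing $x$ and $y$ lie on one of them when they fail MMD, and (iii) exploiting that neighbors of a $U_2$-vertex all sit on the containing internal cycle.

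For part~(a), I would first note that if $d(x,y) = n+1$ then $x\,{\rm MMD}\, y$ is automatic because $n+1 = d(J(n,m))$. For the case $d(x,y) < n+1$, the condition that \emph{both} $x$ and $y$ fail to lie on any diametrical path is exactly what makes MMD work: if, say, $x$ lay on a diametrical path then that path would extend through a neighbor $x'$ of $x$ with $d(x',y) = d(x,y)+1 = n+1$, destroying MMD. Conversely, by Lemma~\ref{lemma-odd}(a) the only such non-diametrical pairs at distance $n$ are $\{u_{nk+\lfloor n/2\rfloor}, u_{n(k+1)+\lfloor n/2\rfloor +1}\}$ and $\{u_{nk+\lfloor n/2\rfloor +2}, u_{n(k+1)+\lfloor n/2\rfloor +3}\}$; for each such pair I would enumerate the shortest paths (in the style of \eqref{thm-paths1}--\eqref{thm-paths4}) and verify directly that every neighbor of $x$ is at distance $\le n$ from $y$ and symmetrically. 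In the remaining sub-case $d(x,y) < n$, the same enumeration shows that $x$ and $y$ already lie together on one of these shortest paths, and so a neighbor of $x$ (or $y$) strictly increases the distance.

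Part~(b) is essentially the even argument recycled: Lemma~\ref{lemma-odd}(b) pins down the pairs $x\in V(C)$, $y\in V(C')$ achieving $d(x,y)=n+1$, which equals the diameter, so MMD is immediate; and for any pair with $d(x,y)<n+1$ a listing of shortest paths analogous to \eqref{thm-paths5}--\eqref{thm-paths6} shows that $x$ and $y$ both lie on some shortest diametrical path, which exhibits a neighbor of $x$ farther from $y$. Part~(c) also copies the even proof almost verbatim: $C$ has odd length $n+2$, so the largest distance inside $C$ is $\lfloor n/2\rfloor + 1$; if $\{x,y\}\not\subseteq U_2$ then some neighbor of $x$ lies off $C$ and increases the distance, while if $x,y\in U_2$ realize this maximum then all their neighbors lie in $V(C)$ and no further extension is possible, giving MMD.

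The main obstacle will be part~(a), specifically the case $d(x,y) = n$. Unlike the even case, where the diametrical pairs on adjacent internal cycles are the unique maximizers, here there is a second "tier" of pairs at distance $n$, and one must distinguish those that lie on a diametrical path (which fail MMD because of the path extension through a neighbor) from those that do not (which succeed). Handling this cleanly requires using Lemma~\ref{lemma-odd}(a) twice: once to locate the diametrical pairs and once to locate the relevant non-diametrical pairs, and then arguing that no neighbor of the latter vertices increases the distance to $n+1$.
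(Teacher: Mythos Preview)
Your proposal is correct and is exactly the approach the paper takes: the paper omits the proof of Theorem~\ref{thm-odd} entirely, stating only that it ``is similar to the proof of Theorem~\ref{thm-even},'' which is precisely the template you outline (swap Lemma~\ref{lemma-even} for Lemma~\ref{lemma-odd}, use $d(J(n,m))=n+1$, and handle the extra distance-$n$ tier in part~(a) via the diametrical-path criterion). Your identification of the $d(x,y)=n$ case in part~(a) as the only genuinely new wrinkle is spot on.
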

	
	For each $k\in \{0,1,\ldots, m-1\}$, Lemma~\ref{lemma-odd} (a) 
	and Theorem~\ref{thm-odd} (a) imply that 
	the following edges belong to
	 $E(J(n,m)_{SR})$:
	 \begin{align}\label{part-a-odd}
	 \begin{split}
&	u_{nk+\lfloor \frac{n}{2}\rfloor}u_{n(k+1)+\lfloor \frac{n}{2}\rfloor+1},
	u_{nk+\lfloor \frac{n}{2}\rfloor + 1}u_{n(k-1)+\lfloor \frac{n}{2}\rfloor},\\
&	u_{nk+\lfloor \frac{n}{2}\rfloor + 1}u_{n(k+1)+\lfloor \frac{n}{2}\rfloor+2},
	u_{nk+\lfloor \frac{n}{2}\rfloor + 2}u_{n(k-1)+\lfloor \frac{n}{2}\rfloor+1},\\
&	u_{nk+\lfloor \frac{n}{2}\rfloor + 2}u_{n(k+1)+\lfloor \frac{n}{2}\rfloor+3},
	u_{nk+\lfloor \frac{n}{2}\rfloor + 3}u_{n(k-1)+\lfloor \frac{n}{2}\rfloor+2},
		\end{split}
		\end{align}
	where $k-1$ and $k+1$ are integers modulo $m$. Denote by
	${\cal A}_2$ the set of all edges given in \eqref{part-a-odd} for each
	$k\in \{0,1,\ldots, m-1\}$.
	
	Similarly, for each 
	$k,k'\in \{0,1,\ldots, m-1\}$ with $|k-k'|\notin \{1,m-1\}$, Lemma~\ref{lemma-odd} (b) and Theorem~\ref{thm-odd} (b) imply that
	the following edges belong to
	 $E(J(n,m)_{SR})$:
	\begin{align}\label{part-b-odd}
	\begin{split}
	& u_{nk+\lfloor \frac{n}{2}\rfloor+1}u_{nk'+\lfloor \frac{n}{2}\rfloor+1}, 
	u_{nk+\lfloor \frac{n}{2}\rfloor+1}u_{nk'+\lfloor \frac{n}{2}\rfloor+2},\\
	& u_{nk+\lfloor \frac{n}{2}\rfloor+2}u_{nk'+\lfloor \frac{n}{2}\rfloor+1},
	u_{nk+\lfloor \frac{n}{2}\rfloor+2}u_{nk'+\lfloor \frac{n}{2}\rfloor+2}.
	\end{split}
	\end{align}
	Denote by
	${\cal B}_2$ the set of all edges given in \eqref{part-b-odd} for each 
	$k,k'\in \{0,1,\ldots, m-1\}$ with $|k-k'|\notin \{1,m-1\}$.
	
	Moreover, for each $k\in \{0,1,\ldots, m-1\}$,
	Lemma~\ref{lemma-odd} (c) and Theorem~\ref{thm-odd} (c) imply that the following
	edge belongs to 
	 $E(J(n,m)_{SR})$:
	\begin{equation}\label{part-c-odd}
	u_{nk+i}u_{nk+j},
	\end{equation}
	where $i\in \{2,\ldots,\lfloor \frac{n}{2}\rfloor\}$,
	$j\in \{\lfloor \frac{n}{2}\rfloor+3,\ldots,n\}$ with
	$j-i\in \{\lfloor \frac{n}{2}\rfloor+1, \lfloor \frac{n}{2}\rfloor+2\}$.
	Denote by
	${\cal C}_2$ the set of all edges given in \eqref{part-c-odd} for each
	$k\in \{0,1,\ldots, m-1\}$.
	Thus $E(J(n,m)_{SR})={\cal A}_2\cup {\cal B}_2\cup {\cal C}_2$ when
	$n\geq 5$ is odd and $m\geq 4$.
	
	\begin{lemma}
	\label{lem-vc-odd}
	Assume that $n\geq 5$ is odd and $m\geq 4$. Then
	$\alpha(J(n,m)_{SR}) = m(\frac{n-1}{2})+m -3$.
	\end{lemma}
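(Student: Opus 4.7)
The plan is to compute $\alpha(J(n,m)_{SR})$ by finding the independence number of its non-isolated part, since isolated vertices contribute nothing to the vertex cover. Every edge in $\mathcal{A}_2\cup\mathcal{B}_2\cup\mathcal{C}_2$ has both endpoints in $U_2$, so $c$ and the vertices $u_{nk+1}$ ($0\le k\le m-1$) are isolated in $J(n,m)_{SR}$, leaving $m(n-1)$ non-isolated vertices. Writing $p=\lfloor n/2\rfloor = (n-1)/2$, I would partition the non-isolated vertices on each cycle $C_k$ into the \emph{central pair} $\{u_{nk+p+1}, u_{nk+p+2}\}$ and the $n-3$ \emph{side vertices}. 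A direct check of \eqref{part-c-odd} shows that the $\mathcal{C}_2$-edges on $C_k$ arrange the side vertices into a single path on $n-3$ vertices whose endpoints are $u_{nk+p}$ and $u_{nk+p+3}$ (all interior side vertices have degree $2$, the two named endpoints have degree $1$, and the total edge count matches). Since $n-3$ is even, this path has independence number $(n-3)/2=p-1$, and a maximum independent set avoiding any one prescribed endpoint still exists.

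Let $H$ be the induced subgraph of $J(n,m)_{SR}$ on the $2m$ central vertices. By \eqref{part-a-odd}, the only $\mathcal{A}_2$-edge of $H$ between adjacent cycles $C_k$ and $C_{k+1}$ is $u_{nk+p+1}u_{n(k+1)+p+2}$, while \eqref{part-b-odd} contributes the complete bipartite graph $K_{2,2}$ between the central pairs of any two non-adjacent cycles. The key structural claim is $\beta(H)=3$. Indeed, if an independent set of $H$ met three distinct cycles $k_1, k_2, k_3$, each pair would require $|k_i-k_j|\in\{1, m-1\}$ (otherwise a $\mathcal{B}_2$-edge lies inside the IS), so $\{k_1, k_2, k_3\}$ would form a triangle in $C_m$; this is impossible for $m\ge 4$. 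Hence any independent set of $H$ is contained in the central pairs of at most two adjacent cycles $C_k, C_{k+1}$, where the four vertices induce only the edge $u_{nk+p+1}u_{n(k+1)+p+2}$, giving $\beta(H)=3$, attained for instance by $\{u_{p+1}, u_{p+2}, u_{n+p+1}\}$.

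Combining these observations, any independent set $I$ in the non-isolated part of $J(n,m)_{SR}$ splits as $|I|=|I\cap V(H)|+\sum_{k=0}^{m-1}|I\cap\mathrm{path}_k|\le 3+m(p-1)$. For the matching lower bound I would take $\{u_{p+1}, u_{p+2}, u_{n+p+1}\}$ as the central part of $I$; the three $\mathcal{A}_2$-edges $u_{p+1}u_{n(m-1)+p}$, $u_{p+2}u_{n+p+3}$, $u_{n+p+1}u_{p}$ force the exclusion of $u_{n(m-1)+p}$, $u_{n+p+3}$, $u_{p}$, each of which is a single endpoint of the $P_{n-3}$-path on $C_{m-1}$, $C_1$, $C_0$ respectively. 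By the endpoint-avoidance remark, each of these paths still admits an IS of size $p-1$, while the remaining $m-3$ paths contribute full size-$(p-1)$ ISs freely; this yields an IS of size $3+m(p-1)$. Therefore
\[
\alpha(J(n,m)_{SR}) = m(n-1)-\bigl(3+m(p-1)\bigr) = m\cdot\frac{n-1}{2}+m-3.
\]
The main obstacle will be the triangle-freeness argument pinning $\beta(H)$ to $3$; once that is in hand, matching path-independent sets to the central IS is routine because only one endpoint per affected path is ever forbidden.
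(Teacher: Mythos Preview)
Your argument is correct and runs dually to the paper's. The paper builds a minimum vertex cover directly: it first argues that every vertex cover must contain at least one full central pair $\{u_{nk+p+1},u_{nk+p+2}\}$ from each $C_4$ in $\mathcal{B}_2$, takes $2(m-2)$ such vertices, then augments by one more central vertex and by $\lfloor(n-4)/2\rfloor+1$ vertices per $\mathcal{C}_2$-path to mop up the leftover $\mathcal{A}_2$-edges listed in (2.11)--(2.13), asserting minimality at each stage. You instead pass to the independence number via Gallai, split the non-isolated vertices into the induced ``central'' subgraph $H$ on the $2m$ middle vertices and the $m$ side paths $P_{n-3}$, and prove $\beta(H)=3$ by the pigeonhole observation that three pairwise-adjacent indices in $C_m$ are impossible for $m\ge4$. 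The upper bound $\beta\le 3+m(p-1)$ then drops out of the decomposition with no further bookkeeping, and your explicit independent set (three central vertices whose only side neighbours are single endpoints of three distinct $P_{n-3}$'s, which can therefore still contribute full-size independent sets) gives the matching lower bound. Your route makes the optimality certificate cleaner---the paper's ``minimum number of vertices required'' claims are stated rather than proved at each augmentation step---at the cost of having to check carefully that all $\mathcal{A}_2$-edges between central and side vertices hit only path endpoints, which you do correctly.
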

	\begin{proof}
	We construct a vertex cover of $J(n,m)_{SR}$
	 with minimum cardinality.
	For any
	$k,k'\in \{0,1,\ldots, m-1\}$ with $|k-k'|\notin \{1,m-1\}$, the vertices
	$u_{nk+\lfloor \frac{n}{2}\rfloor+1}$, 
$u_{nk+\lfloor \frac{n}{2}\rfloor+2}$, $u_{nk'+\lfloor \frac{n}{2}\rfloor+1}$ and 
$u_{nk'+\lfloor \frac{n}{2}\rfloor+2}$
form a cycle $C_4$ in $J(n,m)_{SR}$, where
$u_{nk+\lfloor \frac{n}{2}\rfloor+1}$
is not adjacent to
$u_{nk+\lfloor \frac{n}{2}\rfloor+2}$
and $u_{nk'+\lfloor \frac{n}{2}\rfloor+1}$ is not adjacent to
$u_{nk'+\lfloor \frac{n}{2}\rfloor+2}$ in $J(n,m)_{SR}$ (See \eqref{part-b-odd}).
Thus, to cover the edges of $C_4$, either
the pair of vertices $u_{nk+\lfloor \frac{n}{2}\rfloor+1},
u_{nk+\lfloor \frac{n}{2}\rfloor+2}$ or the the pair of vertices 
$u_{nk'+\lfloor \frac{n}{2}\rfloor+1}, u_{nk'+\lfloor \frac{n}{2}\rfloor+2}$
must be in any 
vertex cover of $J(n,m)_{SR}$.

Thus, for any $k\in \{0,1,\ldots, m-1\}$, 
the $2(m-2)$ vertices given by
$\{u_{n(k+l)+\lfloor \frac{n}{2}\rfloor+1}$, $
u_{n(k+l)+\lfloor \frac{n}{2}\rfloor+2}\mid 
l=0,1,\ldots,m-3\}$ 
are the minimum number of vertices required to
 cover all edges of ${\cal B}_2$. 
By symmetry, we take $k=0$ and let 
$S = \{u_{nl+\lfloor \frac{n}{2}\rfloor+1},
u_{nl+\lfloor \frac{n}{2}\rfloor+2}\mid 
l=0,1,\ldots,m-3\}$. 

We notice that all edges of ${\cal A}_2$ are also covered by $S$ except the following edges:
\begin{eqnarray}
&& u_{n(m-2)+\lfloor \frac{n}{2}\rfloor+1}
u_{n(m-1)+\lfloor \frac{n}{2}\rfloor+2}, 
u_{n(m-1)+\lfloor \frac{n}{2}\rfloor+2}
u_{\lfloor \frac{n}{2}\rfloor+3},
\label{uncovered3}\\
&& u_{n(m-2)+\lfloor \frac{n}{2}\rfloor+1}
u_{n(m-3)+\lfloor \frac{n}{2}\rfloor},
u_{n(m-2)+\lfloor \frac{n}{2}\rfloor+2}
u_{n(m-1)+\lfloor \frac{n}{2}\rfloor+3},
\label{uncovered1}\\
&& u_{n(m-1)+\lfloor \frac{n}{2}\rfloor+1}
u_{n(m-2)+\lfloor \frac{n}{2}\rfloor}.\label{uncovered4}
\end{eqnarray}
To cover the edge 
 $u_{n(m-2)+\lfloor \frac{n}{2}\rfloor+1}
 u_{n(m-1)+\lfloor \frac{n}{2}\rfloor+2}$
 given in \eqref{uncovered3},
 we 
augment the set $S$ by taking $S:= S\cup \{u_{n(m-1)+\lfloor \frac{n}{2}\rfloor+2}\}$.
Then the modified set $S$ also covers the edge
 $u_{n(m-1)+\lfloor \frac{n}{2}\rfloor+2}
u_{\lfloor \frac{n}{2}\rfloor+3}$ given
 in \eqref{uncovered3}.

Next, we find the minimum number of vertices which cover the edges of ${\cal C}_2$ and the leftover
edges given in  \eqref{uncovered1} and 
\eqref{uncovered4}.
For each $k\in \{0,1,\ldots, m-1\}$,
the edges given in \eqref{part-c-odd} form the path
$u_{nk+\lfloor \frac{n}{2}\rfloor+3}
u_{nk+2}
u_{nk+\lfloor \frac{n}{2}\rfloor+4}
u_{nk+3}
\ldots
u_{nk+n-1}
u_{nk+\lfloor \frac{n}{2}\rfloor-1}
u_{nk+n}
u_{nk+\lfloor \frac{n}{2}\rfloor}
$
of length $n-4$. To cover all edges of this path, 
we need 
minimum $\lfloor \frac{n-4}{2}\rfloor +1$ vertices. 
We further augment the set $S$ by taking 
 $S := S\cup \{u_{nk+i} \mid i\in \{2,\ldots,\lfloor \frac{n}{2}\rfloor\},
	k\in \{0,1,\ldots,m-2\}\}\cup \{u_{n(m-1)+i}\mid i\in\{\lfloor \frac{n}{2}\rfloor+3,\ldots, n\} \}$.
 Then the modified set $S$ 
 also covers the  
 edges of ${\cal C}_2$ and the three leftover edges given in
  \eqref{uncovered1}
 and \eqref{uncovered4}. Thus 
 $S$ is a vertex cover of $J(n,m)_{SR}$
 with minimum cardinality. The cardinality of $S$ is 
 given by $m(\frac{n-1}{2})+m -3$.
\end{proof}
Now we have the following main result
when $n$ is odd.  
\begin{theorem}
	Assume that $n\geq 5$ is odd and $m\geq 4$. Then
	$sdim(J(n,m)) = m(\frac{n-1}{2})+m -3$.
\end{theorem}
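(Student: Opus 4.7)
The plan is essentially a one-line reduction: I would invoke Theorem~\ref{thm-vc-sd} to translate the strong metric dimension into the vertex covering number of the strong resolving graph, and then plug in the value already computed in Lemma~\ref{lem-vc-odd}. Since both ingredients are in hand, no new combinatorial work is required here.

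Concretely, I would first note that $J(n,m)$ is a simple connected graph (it contains a spanning cycle), so Theorem~\ref{thm-vc-sd} applies and gives $sdim(J(n,m)) = \alpha(J(n,m)_{SR})$. Next, under the standing hypotheses $n \geq 5$ odd and $m \geq 4$, Lemma~\ref{lem-vc-odd} asserts that $\alpha(J(n,m)_{SR}) = m\bigl(\tfrac{n-1}{2}\bigr)+m-3$. Chaining the two equalities yields the claimed formula. This mirrors exactly the even case proof given immediately after Lemma~\ref{lem-vc-even}.

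There is essentially no obstacle in this step itself; all the real work was done earlier. The part of the argument that carries the content, and where one has to be careful, is Lemma~\ref{lem-vc-odd}: identifying the $C_4$'s in $\mathcal{B}_2$, recognizing that in any vertex cover at least two of the four ``middle'' vertices of each pair of non-adjacent internal cycles must be selected, and then checking that the chosen set $S$ simultaneously covers $\mathcal{A}_2$, $\mathcal{C}_2$, and the leftover edges \eqref{uncovered3}--\eqref{uncovered4}. Once Lemma~\ref{lem-vc-odd} is granted, however, the theorem itself reduces to a direct citation, and I would simply write: ``The proof follows from Theorem~\ref{thm-vc-sd} and Lemma~\ref{lem-vc-odd}.''
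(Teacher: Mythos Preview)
Your proposal is correct and matches the paper's proof exactly: the paper's argument is the single sentence ``The proof follows from Theorem~\ref{thm-vc-sd} and Lemma~\ref{lem-vc-odd}.'' Your added remarks about where the real content lies (in Lemma~\ref{lem-vc-odd}) are accurate commentary but not part of the proof itself.
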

\begin{proof}
	The proof follows from Theorem~\ref{thm-vc-sd} and
	Lemma~\ref{lem-vc-odd}.
\end{proof}
	In the following example, we 
	first find strong resolving graph 
	of $J(6,5)$ using Theorem~\ref{thm-even}
	and then
	construct a vertex cover of $J(6,5)_{SR}$ with
	 minimum cardinality using the technique
	 shown in the proof of
	Lemma \ref{lem-vc-even}. This further gives
	strong metric dimension of $J(6,5)$.
	\begin{example}
		Consider the
		graph $J(6,5)$ shown in 
		Figure~\ref{fig4}(a). 
		Using  Theorem~\ref{thm-even},
		we have
		  $E(J(6,5)_{SR})={\cal A}_{1}
		  \cup {\cal  B}_{1}\cup {\cal C}_{1}$, where ${\cal A}_{1}$, 
		  ${\cal B}_{1}$ and ${\cal C}_{1}$ are given by:		
		  \begin{eqnarray*}
		 {\cal A}_{1} &=&
		\{u_{4}u_{11},u_{4}u_{27},u_{10}u_{3},u_{10}u_{17},u_{16}u_{9},
		u_{16}u_{23},u_{22}u_{15},u_{22}u_{19},\\
		&& u_{28}u_{21},u_{28}u_{5}\},\\
		 {\cal B}_{1} &=&
		\{u_{4}u_{16},u_{4}u_{22},u_{10}u_{22},u_{10}u_{28},u_{16}u_{28}\},\\
		{\cal C}_{1}&=&
		\{u_{2}u_{6},u_{8}u_{12},u_{14}u_{18},u_{20}u_{24},u_{26}u_{30}\}.
		\end{eqnarray*}
		The graph $J(6,5)_{SR}$ is shown in 
		Figure~\ref{fig4}(b).
		Using the technique
		shown in the proof of
		Lemma \ref{lem-vc-even} 
		the vertex cover of $J(6,5)_{SR}$
		 with minimum cardinality
		 is the set $S$ given by:
		 \begin{equation}
		 S =\{u_{4},u_{10},u_{16},u_{22},u_{28},
		 u_{2},u_{8},u_{14},u_{20},u_{26}\}. 
		 \end{equation}
		  Hence $sdim(J(6,5))=10$.
	\end{example}
\begin{figure}[H]
	\begin{center}
		{\includegraphics[width=14cm]{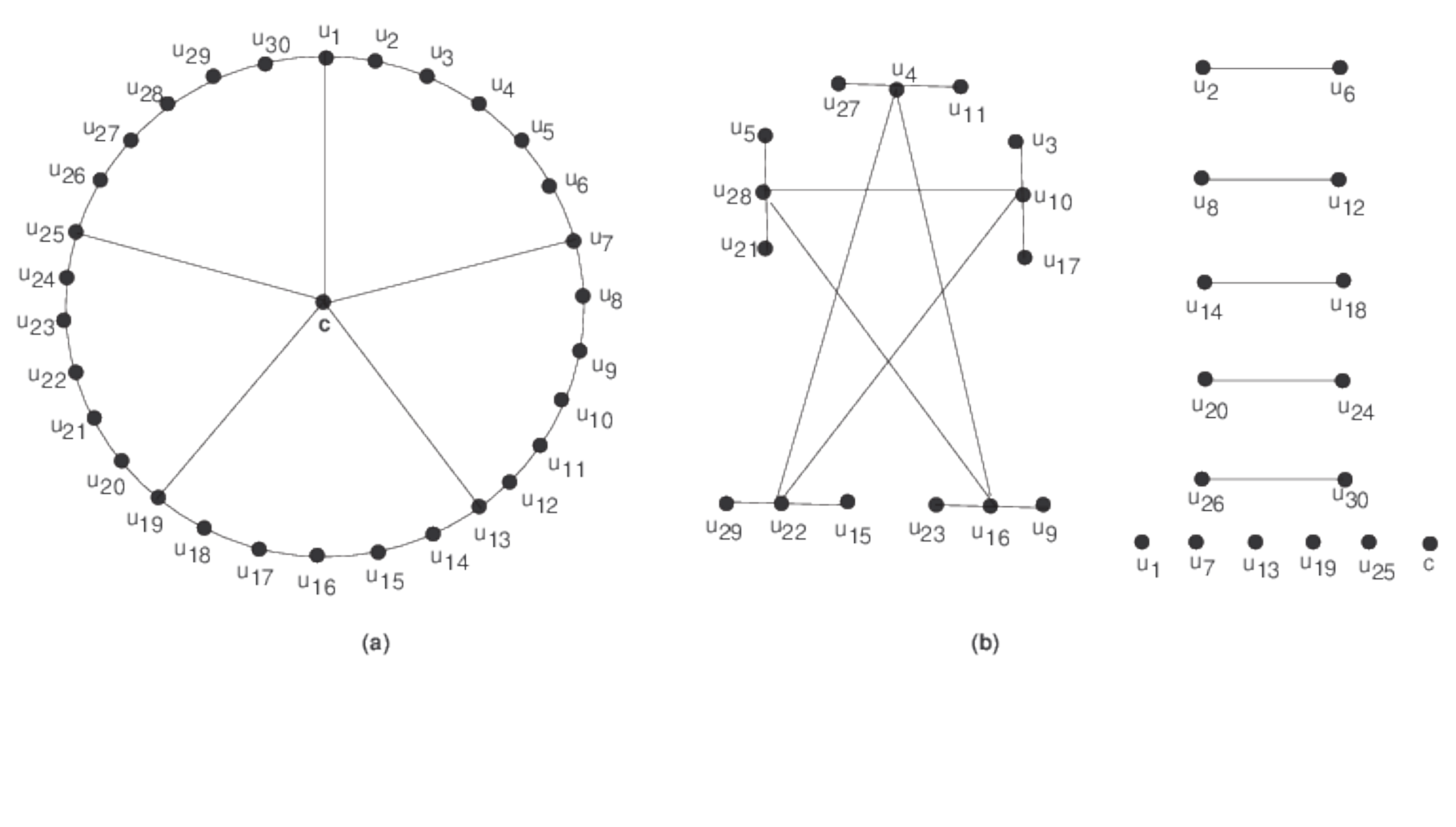}}
		\caption{(a) The graph $J(6,5)$, (b)
			The graph $J(6,5)_{SR}$}\label{fig4}
	\end{center}
\end{figure}
	
	In next example, we 
	find strong resolving graph 
	of $J(5,5)$ using Theorem~\ref{thm-odd}
	and then
	construct
	a vertex cover of $J(5,5)_{SR}$ with
	minimum cardinality using the technique
	shown in the proof of
	Lemma \ref{lem-vc-odd}. This further gives
	strong metric dimension of $J(5,5)$.
\begin{example}
	Consider the
	graph $J(5,5)$ shown in 
	Figure~\ref{fig5}(a). 
	Using  Theorem~\ref{thm-odd},
	we have
	$E(J(5,5)_{SR})={\cal A}_{2}
	\cup {\cal  B}_{2}\cup {\cal C}_{2}$, where ${\cal A}_{2}$, 
	${\cal B}_{2}$ and ${\cal C}_{2}$ are given by:
	\begin{eqnarray*}
	{\cal A}_{2} &=&\{u_{2}u_{8},u_{3}u_{22},u_{3}u_{9},u_{4}u_{23},u_{4}u_{10},u_{5}u_{24},u_{7}u_{13},u_{8}u_{14},u_{9}u_{15},u_{12}u_{18},\\
	&&
	u_{13}u_{19},
	u_{14}u_{20},u_{17}u_{23},u_{18}u_{24},u_{19}u_{25}\},\\
	{\cal B}_{2}&=&
		\{u_{3}u_{13},u_{3}u_{14},u_{4}u_{13},u_{4}u_{14},u_{3}u_{18},u_{3}u_{19},u_{4}u_{18},u_{4}u_{19},
		u_{8}u_{18},	
		u_{8}u_{19},\\
		&& u_{9}u_{18},
	u_{9}u_{19},u_{8}u_{23},u_{8}u_{24},u_{9}u_{23},u_{9}u_{24},u_{13}u_{23},u_{13}u_{24},u_{14}u_{23},u_{14}u_{24}\},\\
	{\cal C}_{2} &=&
	\{u_{2}u_{5},u_{7}u_{10},u_{12}u_{15},u_{17}u_{20},u_{22}u_{25}\}.
	\end{eqnarray*}
	The graph $J(5,5)_{SR}$ is shown in 
	Figure~\ref{fig5}(b).
	Using the technique
	shown in the proof of
	Lemma \ref{lem-vc-odd} 
	the vertex cover of $J(5,5)_{SR}$
	with minimum cardinality
	is the set $S$ given by:
	\begin{equation*}
	S = \{u_{3},u_{4},u_{8},u_{9},u_{13},u_{14},
	u_{12},u_{17},u_{24},u_{25},
	u_{2},u_{7}.
	\}
	\end{equation*}
	Hence $sdim(J(5,5))=12$.	
\end{example}
\begin{figure}[H]
	\begin{center}
		{\includegraphics[width=14cm]{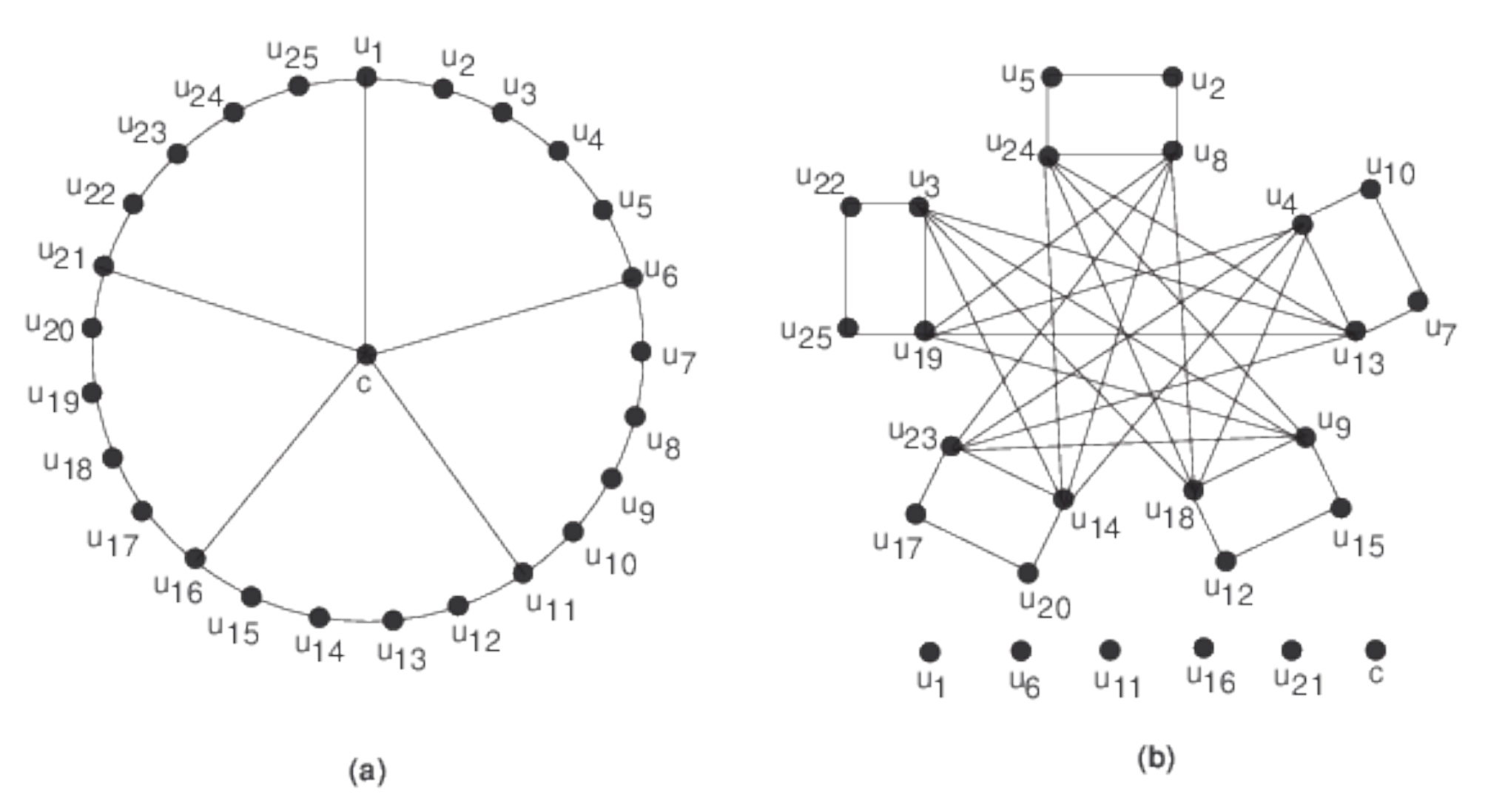}}
		\caption{(a) The graph $J(5,5)$, 
		(b) The graph $J(5,5)_{SR}$}\label{fig5}
	\end{center}
\end{figure}
	
\section*{Acknowledgment} 
Authors are thankful to the
Higher Education Commission of Pakistan for financial support to carry out this research under grant No. 20-3067/NRPU/R$\&$D/HEC/12.

\end{document}